\theoremstyle{definition}
\newtheorem{thm}{Theorem}[section]
\newtheorem{prop}[thm]{Proposition}
\newtheorem{lmm}[thm]{Lemma}
\newcommand{\pran}{\mathbb{P}^{1, an}}
\newcommand{\Sh}{\mathcal{O}}
\newcommand{\mul}{\mathop{m}}
\title{Ramification loci of non-archimedean cubic rational functions}
\author{Reimi Irokawa}
\date{\today}
\address{Graduate School of Science, Tokyo Insitute of Technology}
\email{irokawa.r.aa@m.titech.ac.jp}
\subjclass[2010]{Primary: 14H0, Secondary: 37P50, 14G22, 26E30}
\keywords{non-archimedean field; ramification locus; Berkovich geometry}
\begin{document}
\maketitle
\begin{abstract}
For a cubic rational function with coefficients in a non-archimedean field $K$ whose residue characteristic is $0$ or greater than $3$, there are $2$ possibilities for the shape of its Berkovich ramification locus, considered as an endomorphism of the Berkovich projective line: one is the connected hull of all the critical points, and the other is consisting of $2$ disjoint segments. In this paper, we list up all the possible forms of cubic rational functions and calculate their ramification loci.
\end{abstract}

\section{Introduction}\label{intro}
\subsection{Main results}\label{Main Results}
Let $K$ be an algebraically closed field with complete non-archimedean and non-trivial valuation, We assume that the residue characteristic of $K$ is $0$ or greater than $3$. $\Sh_K$ be the valuation ring, and $\phi$ be a cubic rational function with coefficients in $K$. Rational functions can be considered as endomorphisms of the Berkovich projective line $\pran$ (for definition, see \cite{B1}). The Berkovich ramification locus, or simply the ramification locus of $\phi$ is defined to be the following set
 \begin{align*}
     \mathcal{R}_{\phi}=\{x\in\pran|\textstyle{\mathop{m}_{\phi}}(x)>1\},
 \end{align*}
 where the symbol $\mul_{\phi}(x)$ is the multiplicity of $\phi$ at $x$, i.e., the degree of the field extention $[\kappa(x):\kappa(\phi(x))]$, where the field $\kappa(x)$ is the complete residue field at $x$ (for details and another description of $\mul_\phi$, see \cite{BR}). The ramification locus is a closed subset of $\pran$.
 
The aim of this paper is to give a complete description of the shape of the ramification locus of any cubic rational function. Rational functions $\phi$ and $\psi$ are \textit{conjugate} if there exists M\"obius transformations $\tau$ and $\sigma$ such that $\phi=\tau\circ\psi\circ\sigma$. Since automorphisms do not change the shape of ramification loci, we will describe it for the following representative of each conjugate class.

First, if there exists a critical point of $\phi$ whose multiplicity is $3$, then the rational function $\phi$ is conjugate to a polynomial. This can be done by taking $\tau$ and $\sigma$ so that the critical point with multiplicity $3$ of $\tau\circ\phi\circ\sigma$ is $\infty$. Otherwise, taking a suitable $\tau$ and $\sigma$, we may assume the following conditions;

 \begin{enumerate}
     \item $0$ and $1$ are fixed critical points, 
     \item $\infty$ is fixed but \underline{not} critical, and
     \item the other $2$ critical points are distinct.
 \end{enumerate}
  The cubic rational function $\phi$ with the above conditions can be put
\begin{align}
    \phi(z) = \frac{a_3 z^3+a_2 z^2}{b_2 z^2+b_1 z+b_0}=\frac{(1-\alpha)(1-\beta)z^2(z-\gamma)}{(1-\gamma)(z-\alpha)(z-\beta)}, \tag{$\diamondsuit$} \label{phi}
\end{align}
where $a_2, a_3 ,b_0, b_1, b_2\in \Sh_K$ and $\alpha,\beta,\gamma\in K$. Set $f(z)=a_3z^3+a_2z^2$ and $g(z)=b_2z^2+b_1z+b_0$. To satisfy the above $3$ conditions, we further assume several conditions on them; for details, see the next subsection. Throughout this paper, we consider polynomials or rational functions of this form to calculate the ramification locus. Our result is briefly stated as follows:

\begin{thm}
The ramification locus of a cubic rational function $\phi$ is connected if and only if $\phi$ is conjugate to a polynomial or a rational function of the above forms with the following conditions:
    \begin{itemize}
         \item $|a_2|<1$ and $|a_3|=|b_0|=1$,
         \item $|b_2|<1$ and $|a_3|=|a_2|=|b_0|=|g(\gamma)|=1$, 
         \item $|a_3|=|a_2|=|b_2|=|b_0|=|g(\gamma)|=1$,
         \item $|a_3|,|a_2|<1$, $|a_3|\geq|a_2|$ and $|\gamma-1|=1$,
         \item $|a_3|,|b_1|,|b_0|<1$, $|a_2|=1$, $|a_3|=|b_0|$ and $|a_3|\geq|b_1|$,
         \item $|a_3|=|a_2|=|b_2|=|b_0|=1$,  $|g(\gamma)|<1$ and $|\gamma-1|\leq|\beta-(1/2)|<1$.
    \end{itemize}

The ramification locus of $\phi$ consists of $2$ disjoint segments if and only if $\phi$ is conjugate to a rational function of the above forms with the following conditions:
\begin{itemize}
         \item $|a_3|<1$ and $|a_2|=|b_0|=1$,
         \item $|b_0|<1$ and $|a_3|=|b_2|=|b_1|=1$,
         \item $|b_0|,|b_2|<1$ and $|a_3|=|a_2|=|b_1|=1$,
         \item $|b_2|<1$, $|a_3|=|a_2|=|b_0|=1$ and $|g(\gamma)|<1$,
        \item $|a_3|,|a_2|<1$ and $|g(\gamma)|<1$,
        \item $|a_3|,|a_2|<1$ and $|a_2|>|a_3|$,
        \item $|a_3|,|b_1|,|b_0|<1$, $|a_2|=1$ and $|b_1|>|a_3|$,
        \item $|a_3|,|b_1|,|b_0|<1$, $|a_2|=1$, $|a_3|>|b_0|$ and $|a_3|\geq|b_1|$,
        \item $|a_3|=|a_2|=|b_2|=|b_0|=1$, $|g(\gamma)|<1$ and $|\gamma-1|\neq1$, 
        \item $|a_3|=|a_2|=|b_2|=|b_0|=1$, $|g(\gamma)|<1$ and $|\beta-(1/2)|\neq1$,
        \item $|a_3|=|a_2|=|b_2|=|b_0|=1$, $|g(\gamma)|<1$ and $1>|\gamma-1|>|\beta-(1/2)|$.
\end{itemize}
\end{thm}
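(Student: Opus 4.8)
The plan is to turn the global statement into a finite collection of local computations of the multiplicity function $\mul_\phi$ along the subtree of $\pran$ spanned by the critical points. Since the residue characteristic is $0$ or larger than $\deg\phi=3$, the map $\phi$ is tame, and I would first invoke the general structure theory for ramification loci of tame maps: $\mathcal{R}_\phi$ is a finite subtree contained in the connected hull of the critical points, and it has at most $\deg\phi-1=2$ connected components, each of which meets the set of critical points. This already explains why only the two outcomes asserted in the theorem can occur. The four critical points are $0$ and $1$ (fixed simple critical points, $0$ coming from the factor $z^2$ and $1$ from the normalization $\phi'(1)=0$) together with the two further critical points, which are the remaining zeros of the Wronskian $\Wr(f,g)=f'g-fg'$. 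The whole argument then reduces to deciding, configuration by configuration, whether the arcs of the spanning tree joining these four points lie in $\{\mul_\phi>1\}$.

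The central computational device is reduction at type-II points. For a type-II point $x$, after moving it by an element of $PGL_2(\Sh_K)$ to the Gauss point of the unit disk, $\mul_\phi(x)$ equals the degree of the reduced map $\widetilde\phi_x\colon\pr_{\widetilde K}\to\pr_{\widetilde K}$, and $x\in\mathcal{R}_\phi$ if and only if this degree exceeds $1$. I would therefore carry out, for each line of the theorem, the following three steps: (i) \emph{locate the critical points}, reading off the centres and radii of the disks carrying $0$, $1$ and the two Wronskian zeros from the Newton polygons of $f$, $g$ and $\Wr(f,g)$, whose vertices are governed precisely by the prescribed valuations $|a_2|,|a_3|,|b_0|,|b_1|,|b_2|,|\gamma-1|,|g(\gamma)|$; (ii) \emph{draw the spanning tree}, identifying its branch point(s)---for four leaves this is either a single point of valence four or two points of valence three joined by an arc; and (iii) \emph{compute $\mul_\phi$ at the branch point(s) and along the connecting arc} by reducing $\phi$ in the appropriate coordinate.

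With these computations in hand, the connectedness criterion becomes transparent. Because $\mathcal{R}_\phi$ lies in the spanning tree and contains all four critical points, it is connected precisely when no arc of the tree carries a sub-arc on which $\mul_\phi=1$ separating the critical points into two nonempty groups; the two-segment case arises from exactly one such unramified gap, which splits the critical points as $\{0,\ldots\}\sqcup\{1,\ldots\}$ into two ramified clusters. Locating this gap amounts to finding where $\mul_\phi$ drops to $1$ along the connecting arc, and matching that against the prescribed valuation inequalities reproduces the stated list.

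The main obstacle will be the degenerate configurations in which the reduction $\widetilde\phi_x$ drops degree or ceases to be separable, so that a single first-order reduction cannot by itself detect ramification. This happens precisely when two critical points collide to first order or when a critical point migrates to the boundary of the relevant disk, and it forces a rescaled (``second-order'') reduction. The hardest cases are exactly the last ones in the theorem, with $|a_3|=|a_2|=|b_2|=|b_0|=1$ and $|g(\gamma)|<1$, where the coarse valuations no longer distinguish connected from disconnected and one must bring in the finer invariants $|\gamma-1|$ and $|\beta-(1/2)|$; here $|\beta-(1/2)|$ measures the distance of the configuration from a symmetric one, and tracking how it controls the position of the junction relative to the ramified arcs is the crux of the proof.
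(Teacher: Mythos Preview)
Your plan is essentially the paper's own: reduce to the finite case list via $(\heartsuit)$ and $(\clubsuit)$, read off critical-point locations from the Newton polygon of the Wronskian factor $\psi$, and decide connectedness by computing $\deg\overline{\phi}$ (or a rescaled reduction) at the branch points of the spanning tree; the paper's hardest subcase (2-2-2-2-1-1) is handled exactly by the ``second-order'' rescaling you anticipate, via the substitutions $\beta=\tfrac12+p$, $\gamma=1+q$ and the conjugate $\rho(z)=\phi(1+z)-1$. One correction: your claim that in the disconnected case the split is always of the form $\{0,\ldots\}\sqcup\{1,\ldots\}$ is false---in several cases (e.g.\ $|a_3|,|a_2|<1$ with $|a_2|>|a_3|$, or $|a_3|<1$, $|a_2|=|b_0|=1$) the two segments are $[0,1]$ and $[c_1,c_2]$, so you must let the reduction computation, not a preconceived pairing, determine which critical points share a component.
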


In the later sections, we see more detailed information about components of the ramification loci.

This research comes from the works of Faber \cite{F1} and \cite{F2}. There he studies the shape of the ramification locus of rational functions over the Berkovich projective line. When the ramification is tame, then the ramification locus of a rational function is a subgraph of the connected hull of all the critical points by  \cite[Corollary 6.6]{F1}. Also, in general, 
a component containing a point (not necessarily classical) of multiplicity $m$ has at least $2m-2$ critical points counted with multiplicity (see \cite[Theorem A]{F1}). Therefore, in degree $3$ case, since the ramification is always tame when the residue characteristic of $K$ is $0$ or greater than $3$, there are at most $2$ connected components in the ramification locus since the cubic rational functions have $4$ critical points counted with multiplicity. Thus, this is the first non-trivial case; ramification loci of polynomials, rational functions of good reduction, and quadratic rational functions are always connected since in the former two cases the function has a point with multiplicity $d$, and in the last case the function has only $2$ critical points.

\subsection{General strategy}\label{General Strategy}

Let $k$ be the residue field of $K$. For any $a\in\Sh_K$, the symbol $\overline{a}\in k$ denotes its reduction. In the same way, the reduction of any rational function $\psi\in \Sh_K[z]$ is denoted by $\overline{\psi}$. By definition, $\overline{a}=0$ is equivalent to the condition $|a|<1$. For a fixed coordinate of $\pran$, denote its Gauss point by $\zeta_{0,1}$.

For a rational function as in (\ref{phi}) in Section \ref{Main Results}, we may assume that
\begin{itemize}
    \item $a_3\neq0$,
    \item $b_2\neq0$,
    \item at least one of $a_3,a_2,b_2,b_1$ or $b_1$ is invertible, and
    \item polynomials $f$ and $g$ have no common root i.e.\ $\alpha\neq0,$ $\gamma$ and $\beta\neq0$, $\gamma$.
\end{itemize}
Also we have the following equations about the coefficients:
\begin{gather*}
    \phi(1)=1, \text{and}\\
    \text{Wr}_{\phi}(1)=0,
\end{gather*}
where the Wr$_{\phi}(z)$ is the Wronskian of $\phi$:
\begin{align*}
    \text{Wr}_{\phi}(z)=(3a_2z^2+2a_2z)(b_2z^2+b_1z+b_0)-(2b_2z+b_1)(a_3z^3+a_2z^2),
\end{align*}
The first condition is equivalet to
\begin{align}
    a_3+a_2=b_2+b_1+b_0 \tag{$\heartsuit$} \label{phi1}.
\end{align}
The second condition is equivalent to $(3a_3+2a_2)(b_2+b_1+b_0)-(a_3+a_2)(2b_2+b_1)=0$, i.e.,
\begin{align}
    3a_3+2a_2-2b_2-b_1=0 \tag{$\clubsuit$} \label{wr1}
\end{align}
under the condition (\ref{phi1}). We can then list up all the possible cases for the coefficients under these conditions.
\begin{description}
\item[(1-1)] When $\overline{a}_3=\overline{a}_2=0$, we have $\phi(\zeta_{0,1})\neq\zeta_{0,1}$, which is treated in Section \ref{deg0}. This situation is divided into the following $3$ cases:
\begin{description}
    \item[(1-1-1-1)] $|\gamma|\leq1$ i.e.\ $|a_2|\leq|a_3|$, and $\overline{g(\gamma)}=0$,
    \item[(1-1-1-2)] $|\gamma|\leq1$ and $\overline{g(\gamma)}\neq0$,
    \item[(1-1-2)] $|\gamma|>1$ i.e.\ $|a_2|>|a_3|$.
\end{description}
\item[(1-2-1-1)] When $\overline{a}_3=\overline{b}_0=\overline{b}_1=0$ and $\overline{a}_2\neq0$, we have $\phi(\zeta_{0,1})\neq\zeta_{0,1}$, which is treated in Section \ref{deg0}. This situation is divided into the following $3$ cases:
\begin{description}
    \item[(1-2-1-1-1)] $|b_1|>|a_3|$,
    \item[(1-2-1-1-2)] $|b_1|\leq|a_3|$ and $|a_3|>|b_0|$,
    \item[(1-2-1-1-3)] $|a_3|=|b_0|=|b_1|$.
\end{description}
Any other condition on $a_3$, $b_1$ and $b_0$ is impossible by (\ref{phi1}) and (\ref{wr1}).
\item[(1-2-1-2)] When $\overline{a}_3=\overline{b}_0=0$, $\overline{a}_2\neq0$ and $\overline{b}_1\neq0$, the degree of $\overline{\phi}$ is $1$. This case is treated in Section \ref{deg1}.
\item[(1-2-2)] When $\overline{a}_3=0$, $\overline{a}_2\neq0$ and $\overline{b}_0\neq0$, the degree of $\overline{\phi}$ is $2$, which is treated in Section \ref{deg2}.
\item[(2-1-1)] when $\overline{a}_3\neq0$ and $\overline{a}_2=\overline{b}_0=0$, the degree of $\overline{\phi}$ is 1. It is treated in Section \ref{deg1}.
\item[(2-1-2)] When $\overline{a}_3\neq0$, $\overline{a}_2=0$ and $\overline{b}_0\neq0$, the function $\phi$ has good reduction i.e., the ramification locus is connected.
\item[(2-2)] When $\overline{a}_3\neq0$ and $\overline{a}_2\neq0$, the degree of $\overline{\phi}$ depends on whether$\overline{g(\gamma)}$ is zero or not, and whether $\overline{b}_0$ is zero or not.
\begin{description}
\item[(2-2-1-1)] When $\overline{b}_0=\overline{b}_2=0$, the degree of $\phi$ is $2$. This case is treated in Section \ref{deg2}.
\item[(2-2-1-2-1)] When $\overline{b}_0=\overline{g(\gamma)}=0$ and $\overline{b}_2\neq0$, the degree of $\overline{\phi}$ is 1. It is treated in Section \ref{deg1}.
\item[(2-2-1-2-2)] When $\overline{b}_0=0$, $\overline{b}_2\neq0$ and $\overline{g(\gamma)}\neq0$, the degree of $\overline{\phi}$ is $2$. It is treated in Section \ref{deg2}.
\item[(2-2-2-1-1)] When $\overline{b}_2=\overline{g(\gamma)}=0$ and $\overline{b}_0\neq0$, the degree of $\overline{\phi}$ is $1$, which is treated in Section \ref{deg2}.
\item[(2-2-2-1-2)] When $\overline{b}_2=0$ and $\overline{b}_0$, $\overline{g(\gamma)}\neq0$, the degree of $\overline{\phi}$ is $3$ i.e., $\phi$ has good reduction and the ramification locus is connected;
\item[(2-2-2-2-1)] When $\overline{b}_0\neq0$, $\overline{b}_2\neq0$ and $\overline{g(\gamma)}=0$, the degree of $\overline{\phi}$ is $2$. Later, we will devide this case into further two cases as follows:
\begin{description}
\item[(2-2-2-2-1-1)] $\overline{\alpha}=\overline{\gamma}=1$ and $\overline{\beta}=1/2$;
\item[(2-2-2-2-1-2)] otherwise.
\end{description}
The former case is treated in Section \ref{deg2-2}, and the latter case is treated in Section \ref{deg2};
\item[(2-2-2-2-2)] When $\overline{b}_0$, $\overline{b}_2$, $\overline{g(\gamma)}\neq0$, the function $\phi$ has good reduction i.e., the ramification locus is connected.
\end{description}
\end{description}

The numbering is due to Figure \ref{a3eq0} and Figure \ref{a3neq0}.

\begin{figure}
\centering
\[
	\begin{tikzpicture}[scale = 1.0, auto] 
		\node[draw, rectangle] (a3eq0) at (0, 0) {$\overline{a}_3=0$};
		\node[draw, circle, inner sep = 0pt, minimum size =  1pt, fill = black] (a2) at (0,-0.5) {};
		\node[draw, circle, inner sep = 0pt, minimum size = 1pt, fill = black] (a2l) at (-5,-0.5) {};
		\node[draw, circle, inner sep = 0pt, minimum size = 1pt, fill = black] (a2r) at (5,-0.5) {};
		\node[draw, rectangle] (a2=0) at (-5,-1) {$\overline{a}_2=0$};
		\node[draw, rectangle] (a2neq0) at (5, -1) {$\overline{a}_2\neq0$};		
		\draw (a3eq0) to  (a2);
		\draw (a2) to (a2l);
		\draw (a2) to (a2r);
		\draw (a2l) to (a2=0);
		\draw (a2r) to (a2neq0);
		
		\node[draw, circle, inner sep = 0pt, minimum size = 1pt, fill = black] (gamma) at (-5,-1.5) {};
		\node[draw, circle, inner sep = 0pt, minimum size = 1pt, fill = black] (gammal) at (-6.5,-1.5) {};
		\node[draw, circle, inner sep = 0pt, minimum size = 1pt, fill = black] (gammar) at (-3.5,-1.5) {};
		\node[draw, rectangle] (gammaleq1) at (-6.5, -2) {$|\gamma|\leq1$};
		\node[draw, rectangle] (gamma>1) at (-3.5, -2) {$|\gamma|>1$};
		\draw (a2=0) to  (gamma);
		\draw (gamma) to (gammal);
		\draw (gamma) to (gammar);
		\draw (gammal) to (gammaleq1);
		\draw (gammar) to (gamma>1);

		\node[draw, rectangle, rounded corners] (case1-1-2) at (-3.5, -3) {case(1-1-2)};
		\node[draw, rectangle] (ggamma=0) at (-8,-4) {$\overline{g(\gamma)}=0$};
		\node[draw, rectangle] (ggammaneq0) at (-5, -4) {$\overline{g(\gamma)}\neq0$};
		\node[draw, circle, inner sep = 0pt, minimum size = 1pt, fill = black] (ggamma) at (-6.5,-3) {};
		\node[draw, circle, inner sep = 0pt, minimum size = 1pt, fill = black] (ggammal) at (-8,-3) {};
		\node[draw, circle, inner sep = 0pt, minimum size = 1pt, fill = black] (ggammar) at (-5,-3) {};
		\draw (gamma>1) to  (case1-1-2);
		\draw (gammaleq1) to (ggamma);
		\draw (ggamma) to (ggammal);
		\draw (ggamma) to (ggammar);
		\draw (ggammal) to (ggamma=0);
		\draw (ggammar) to (ggammaneq0);

		\node[draw, rectangle, rounded corners] (case1-1-1-1) at (-8, -5) {case(1-1-1-1)};
		\node[draw, rectangle, rounded corners] (case1-1-1-2) at (-5, -5) {case(1-1-1-2)};
		\draw (ggamma=0) to (case1-1-1-1);
		\draw (ggammaneq0) to (case1-1-1-2);
		
		\node[draw, rectangle] (b0=0) at (3.5,-2) {$\overline{b}_0=0$};
		\node[draw, rectangle] (b0neq0) at (6.5, -2) {$\overline{b}_0\neq0$};
		\node[draw, circle, inner sep = 0pt, minimum size =  1pt, fill = black] (b0) at (5,-1.5) {};
		\node[draw, circle, inner sep = 0pt, minimum size = 1pt, fill = black] (b0l) at (3.5,-1.5) {};
		\node[draw, circle, inner sep = 0pt, minimum size = 1pt, fill = black] (b0r) at (6.5,-1.5) {};
		\draw (a2neq0) to  (b0);
		\draw (b0) to (b0l);
		\draw (b0) to (b0r);
		\draw (b0l) to (b0=0);
		\draw (b0r) to (b0neq0);
		
		\node[draw, rectangle, rounded corners] (case1-2-2) at (6.5,-3) {case(1-2-2)};
		\draw (b0neq0) to (case1-2-2);

		\node[draw, rectangle] (b1=0) at (-0.5,-3) {$\overline{b}_1=0$};
		\node[draw, rectangle] (b1neq0) at (4.5, -3) {$\overline{b}_1\neq0$};
		\node[draw, circle, inner sep = 0pt, minimum size =  1pt, fill = black] (b1) at (3.5,-2.5) {};
		\node[draw, circle, inner sep = 0pt, minimum size = 1pt, fill = black] (b1l) at (-0.5,-2.5) {};
		\node[draw, circle, inner sep = 0pt, minimum size = 1pt, fill = black] (b1r) at (4.5,-2.5) {};
		\draw (b0=0) to  (b1);
		\draw (b1) to (b1l);
		\draw (b1) to (b1r);
		\draw (b1l) to (b1=0);
		\draw (b1r) to (b1neq0);

		\node[draw, rectangle, rounded corners] (case1-2-1-2) at (4.5,-4) {case(1-2-1-2)};
		\draw (b1neq0) to (case1-2-1-2);

		\node[draw, rectangle] (b1>a3) at (-2.5,-5) {$|b_1|>|a_3|$};
		\node[draw, align=right, rectangle] (a3>b0) at (-0.5, -5) {$|b_1|\leq|a_3|$ \\ $|a_3|>|b_0|$};
		\node[draw, rectangle] (a3=b0=b1) at (2, -5) {$|a_3|=|b_0|=|b_1|$};
		
		\node[draw, circle, inner sep = 0pt, minimum size =  1pt, fill = black] (a3b0b1l) at (-2.5,-4) {};
		\node[draw, circle, inner sep = 0pt, minimum size = 1pt, fill = black] (a3b0b1c) at (-0.5,-4) {};
		\node[draw, circle, inner sep = 0pt, minimum size = 1pt, fill = black] (a3b0b1r) at (2,-4) {};

		\draw (b1=0) to  (a3b0b1c);
		\draw (a3b0b1l) to (a3b0b1r);
		\draw (a3b0b1l) to (b1>a3);
		\draw (a3b0b1c) to (a3>b0);
		\draw (a3b0b1r) to (a3=b0=b1);

		\node[draw, rectangle, rounded corners] (case1-2-1-1-1) at (-2.5,-6) {case(1-2-1-1-1)};
		\draw (b1>a3) to (case1-2-1-1-1);

		\node[draw, rectangle, rounded corners] (case1-2-1-1-2) at (-0.5,-6.8) {case(1-2-1-1-2)};
		\draw (a3>b0) to (case1-2-1-1-2);

		\node[draw, rectangle, rounded corners] (case1-2-1-1-3) at (2,-6) {case(1-2-1-1-3)};
		\draw (a3=b0=b1) to (case1-2-1-1-3);
\end{tikzpicture}
\]
\caption{The case $\overline{a}_3=0$}\label{a3eq0}
\end{figure}

\begin{figure}
\centering
\[
	\begin{tikzpicture}[scale = 1.0, auto]
		\node[draw, rectangle] (a3neq0) at (0,0) {$\overline{a}_3\neq0$};
		\node[draw, rectangle] (a2=0) at (-4,-1) {$\overline{a}_2=0$};
		\node[draw, rectangle] (a2neq0) at (0,-1) {$\overline{a}_2\neq0$};
		\node[draw, circle, inner sep = 0pt, minimum size = 1pt, fill = black] (a2l) at (-4,-0.5) {};
		\node (a2r)[draw, circle, inner sep = 0pt, minimum size = 1pt, fill = black] at (0,-0.5) {};

		\draw (a3neq0) to (a2neq0);
		\draw (a2l) to (a2=0);
		\draw (a2l) to (a2r);

		\node[draw, rectangle] (b0=0) at (-6,-2) {$\overline{b}_0=0$};
		\node[draw, rectangle] (b0neq0) at (-4,-2) {$\overline{b}_0\neq0$};
		\node[draw, circle, inner sep = 0pt, minimum size = 1pt, fill = black] (b0l) at (-6,-1.5) {};
		\node[draw, circle, inner sep = 0pt, minimum size = 1pt, fill = black]  (b0r) at (-4,-1.5) {};

		\draw (a2=0) to (b0neq0);
		\draw (b0l) to (b0=0);
		\draw (b0l) to (b0r);
		
		\node[draw, rectangle, rounded corners] (case2-1-1) at (-6,-3) {case(2-1-1)};
		\draw (b0=0) to (case2-1-1);

		\node[draw, rectangle, rounded corners] (case2-1-2) at (-4,-3) {case(2-1-2)};
		\draw (b0neq0) to (case2-1-2);

		\node[draw, rectangle] (b0=0) at (0,-2) {$\overline{b}_0=0$};
		\node[draw, rectangle] (b0neq0) at (3,-2) {$\overline{b}_0\neq0$};
		\node[draw, circle, inner sep = 0pt, minimum size = 1pt, fill = black] (b0l) at (0,-1.5) {};
		\node (b0r)[draw, circle, inner sep = 0pt, minimum size = 1pt, fill = black] at (3,-1.5) {};

		\draw (a2neq0) to (b0=0);
		\draw (b0r) to (b0neq0);
		\draw (b0l) to (b0r);

		\node[draw, rectangle] (b2=0) at (-2,-3) {$\overline{b}_2=0$};
		\node[draw, rectangle] (b2neq0) at (0,-3) {$\overline{b}_2\neq0$};
		\node[draw, circle, inner sep = 0pt, minimum size = 1pt, fill = black] (b2l) at (-2,-2.5) {};
		\node[draw, circle, inner sep = 0pt, minimum size = 1pt, fill = black] (b2r) at (0,-2.5) {};

		\draw (b0=0) to (b2neq0);
		\draw (b2l) to (b2=0);
		\draw (b2l) to (b2r);

		\node[draw, rectangle, rounded corners] (case2-2-1-1) at (-2,-4) {case(2-2-1-1)};
		\draw (case2-2-1-1) to (b2=0);

		\node[draw, rectangle] (ggamma=0) at (-1,-5) {$\overline{g(\gamma)}=0$};
		\node[draw, rectangle] (ggammaneq0) at (1,-5) {$\overline{g(\gamma)}\neq0$};
		\node[draw, circle, inner sep = 0pt, minimum size = 1pt, fill = black] (ggammal) at (-1,-4.5) {};
		\node[draw, circle, inner sep = 0pt, minimum size = 1pt, fill = black] (ggammar) at (1,-4.5) {};
		\node[draw, circle, inner sep = 0pt, minimum size = 1pt, fill = black] (ggammac) at (0,-4.5) {};

		\draw (b2neq0) to (ggammac);
		\draw (ggammal) to (ggammar);
		\draw (ggammal) to (ggamma=0);
		\draw (ggammar) to (ggammaneq0);

		\node[align=center, draw, rectangle, rounded corners] (case2-2-1-2-1) at (-1,-6) {case \\(2-2-1-2-1)};
		\draw (case2-2-1-2-1) to (ggamma=0);
		\node[align=center, draw, rectangle, rounded corners] (case2-2-1-2-2) at (1,-6) {case \\(2-2-1-2-2)};
		\draw (case2-2-1-2-2) to (ggammaneq0);
		
		\node[draw, rectangle] (b2=0) at (3,-3) {$\overline{b}_2=0$};
		\node[draw, rectangle] (b2neq0) at (7,-3) {$\overline{b}_2\neq0$};
		\node[draw, circle, inner sep = 0pt, minimum size = 1pt, fill = black] (b2l) at (3,-2.5) {};
		\node[draw, circle, inner sep = 0pt, minimum size = 1pt, fill = black] (b2r) at (7,-2.5) {};

		\draw (b0neq0) to (b2=0);
		\draw (b2r) to (b2neq0);
		\draw (b2l) to (b2r);
		
		\node[draw, rectangle] (ggamma=0) at (3,-5) {$\overline{g(\gamma)}=0$};
		\node[draw, rectangle] (ggammaneq0) at (5,-5) {$\overline{g(\gamma)}\neq0$};
		\node[draw, circle, inner sep = 0pt, minimum size = 1pt, fill = black] (ggammal) at (3,-4.5) {};
		\node[draw, circle, inner sep = 0pt, minimum size = 1pt, fill = black] (ggammar) at (5,-4.5) {};

		\draw (b2=0) to (ggamma=0);
		\draw (ggammal) to (ggammar);
		\draw (ggammar) to (ggammaneq0);

		\node[draw, align=center, rectangle, rounded corners] (case2-2-2-1-2) at (5,-6) {case \\(2-2-2-1-2)};
		\draw (case2-2-2-1-2) to (ggammaneq0);

		\node[draw, align=center, rectangle, rounded corners] (case2-2-2-1-1) at (3,-6) {case \\(2-2-2-1-1)};
		\draw (case2-2-2-1-1) to (ggamma=0);

		\node[draw, rectangle] (ggamma=0) at (7,-5) {$\overline{g(\gamma)}=0$};
		\node[draw, rectangle] (ggammaneq0) at (9,-5) {$\overline{g(\gamma)}\neq0$};
		\node[draw, circle, inner sep = 0pt, minimum size = 1pt, fill = black] (ggammal) at (7,-4.5) {};
		\node[draw, circle, inner sep = 0pt, minimum size = 1pt, fill = black] (ggammar) at (9,-4.5) {};

		\draw (b2neq0) to (ggamma=0);
		\draw (ggammal) to (ggammar);
		\draw (ggammar) to (ggammaneq0);

		\node[draw, align=center, rectangle, rounded corners] (case2-2-2-2-2) at (9,-6) {case \\(2-2-2-2-2)};
		\draw (case2-2-2-2-2) to (ggammaneq0);

		\node[draw, align=center, rectangle] (alpha=1beta=half) at (5,-7.5) {$\overline{\alpha}=\overline{\gamma}=1$,\\ $\overline{\beta}=1/2$};
		\node[draw, align=center, rectangle] (alpha=1betaneqhalf) at (7,-7.5) {$\overline{\alpha}=\overline{\gamma}=1$, \\$\overline{\beta}\neq1/2$};
		\node[draw, align=center, rectangle] (alphaneq1beta=half) at (9,-7.5) {$\overline{\alpha}=\overline{\gamma}\neq1$,\\ $\overline{\beta}=1/2$};
		\node[draw, circle, inner sep = 0pt, minimum size = 1pt, fill = black] (alphabetal) at (5,-6.8) {};
		\node[draw, circle, inner sep = 0pt, minimum size = 1pt, fill = black] (alphabetac) at (7,-6.8) {};
		\node[draw, circle, inner sep = 0pt, minimum size = 1pt, fill = black] (alphabetar) at (9,-6.8) {};

		\draw (ggamma=0) to (alphabetac);
		\draw (alphabetal) to (alphabetar);
		\draw (alphabetal) to (alpha=1beta=half);
		\draw (alphabetac) to (alpha=1betaneqhalf);
		\draw (alphabetar) to (alphaneq1beta=half);

		\node[align=center, draw, rectangle, rounded corners] (case2-2-2-1-1-1-1) at (5,-9) {case \\(2-2-2-2-1-1)};
		\draw (case2-2-2-1-1-1-1) to (alpha=1beta=half);

		\node[draw, circle, inner sep = 0pt, minimum size = 1pt, fill = black] (alphabetarb) at (7,-8.2) {};
		\node[draw, circle, inner sep = 0pt, minimum size = 1pt, fill = black] (alphabetacb) at (9,-8.2) {};
		\node[draw, circle, inner sep = 0pt, minimum size = 1pt, fill = black] (case2-2-2-1-1-1-2a) at (8,-8.2) {};
		
		\draw (alphabetacb) to (alphaneq1beta=half);
		\draw (alphabetarb) to (alpha=1betaneqhalf);
		\draw (alphabetarb) to (alphabetacb);

		\node[align=center, draw, rectangle, rounded corners] (case2-2-2-1-1-1-2) at (8,-9) {case \\(2-2-2-2-1-2)};
		\draw (case2-2-2-1-1-1-2a) to (case2-2-2-1-1-1-2);

	\end{tikzpicture}
\]
\caption{the case $\overline{a}_3\neq0$} \label{a3neq0}
\end{figure}

Since the Wronskian $\mathop{Wr}_{\phi}(z)$ vanishes at $0$ and $1$, we have
\begin{align*}
    \textstyle{\mathop{Wr}_{\phi}(z)}=z(z-1)\psi(z),
\end{align*}
where
\begin{align}
    \psi(z)=a_3b_2z^2+(2a_3b_1+a_3b_2)z-2a_2b_0. \tag{$\spadesuit$} \label{psi}
\end{align}

In each of the above cases, we compare the zeros of $\overline{\psi}(z)$ and $\mathop{Wr}_{\overline{\phi}}(z)$ to calculate the ramification locus.

\section{Calculation}
\subsection{The case $\phi(\zeta_{0,1})\neq\zeta_{0,1}$} \label{deg0}

Cases in (1-1) requires $\overline{a}_3=\overline{a}_2=0$. It follows from (\ref{wr1}) and (\ref{phi1})that
\begin{align*}
    \overline{b}_2+\overline{b}_1+\overline{b}_0&=0,\text{ and}\\
    2\overline{b}_2+\overline{b}_1&=0,
\end{align*}
from which we have $\overline{b}_0=\overline{b}_2$ and $\overline{b}_1=-2\overline{b}_2$.

In case (1-1-1-1), we have $\overline{g(\gamma)}=\overline{b}_2\overline{\gamma}^2-2\overline{b}_2\overline{\gamma}+\overline{b}_2=\overline{b}_2(\overline{\gamma}-1)^2=0$, i.e., $(\overline{a_2/a_3}=)\overline{\gamma}=-1$.
The polynomial $\psi(z)$ in (\ref{psi}) is
\begin{align*}
    \psi(z)=a_3(b_2z^2+(2b_1+b_2)z-2a_2b_0/a_3),
\end{align*}
The reduction of $\psi/a_3$ is 
\begin{align*}
    \overline{\psi/a_3}(z)&=\overline{b}_2z^2-3\overline{b}_2z-2\overline{b}_0(\overline{a_2/a_3})\\
    &=\overline{b}_2(z^2-3z+2).
\end{align*}
The solutions of $\overline{\psi/a_3}(x)=0$ are $\overline{c}_1=-2$ and $\overline{c}_2=-1.$

On the other hand, since $\phi(\zeta_{0,1})=\zeta_{0,|a_3|}$ in this case, we have
\begin{align*}
    \overline{\phi/a_3}(z)&=\frac{z^2(z-\overline{\gamma})}{\overline{b}_2(z-1)^2} \\
    &=\frac{z^2}{\overline{b}_2(z-1)}.
\end{align*}
The Wronskian is
\begin{align*}
    \textstyle{\mathop{Wr}_{\overline{\phi}}}(z)=\overline{b}_2z(z-2).
\end{align*}
Therefore, the ramification locus has $2$ connected components; one is the segment connecting $0$ and $c_1$ and the other is the one connecting $1$ and $c_2$, as shown in Figure \ref{fig:mphi=0-1}.

The case (1-1-1-2) is when $\phi$ has potentially good reduction; the ramification locus is always connected in this case.

In case (1-1-2), we can calculate $c_1$, $c_2$ and zeros of $\mathop{Wr}_{\overline{\phi}}(z)$ in the similar way as above by replacing $\phi/a_3$ and $\psi/a_2$ by $\phi/a_2$ and $\psi/a_2$ respectively; the zeros of $\overline{\psi/a_2}$ are $\overline{c}_1=\overline{c}_2=\infty$ i.e. they have abosolute value greater than $1$, and the zeros of $\mathop{Wr}_{\overline{\phi}}(z)$ are $0$ and $1$. The ramification locus has hence two connected components; one is the segment connecting $0$ and $1$, and the other is the one connecting $c_1$ and $c_2$.

In cases (1-2-1-1), we have $\overline{a}_2=\overline{b}_2$.

In case (1-2-1-1-1), consider 
\begin{align*}
    \phi'(z)&=\frac{\phi(z)-a_2/b_2}{b_1} \\
    &=\frac{b_2a_3z^3/b_1-a_2z-a_2b_0/b_1}{b_2(b_2z^2+b_1z+b_0)}.
\end{align*}

Since $2\overline{b_0/b_1}=-1$ by (\ref{wr1}), we have
\begin{align*}
    \textstyle{\mathop{Wr}_{\phi'}}(z)&=-\overline{b}_2z^2+2\overline{b}_2z(z-\frac{1}{2}) \\
    &=\overline{b}_2z(z-1).
\end{align*}
By Newton polygon argument, the two zeros of $\psi$ have absolute value greater than $1$. Therefore, the ramification locus has two connected components; one is the segment connecting $0$ and $1$, and the other is the one connecting the remaining two critical points.

In case (1-1-2) and (1-2-1-1-1), the shape of the ramification locus looks like Figure \ref{fig:mphi=0-2}

We can do the similar calculation for the cases (1-2-1-1-2) and (1-2-1-1-3) by replacing $b_1$ by $a_3$.

In case (1-2-1-1-2), $\overline{c}_1=0$ and $\overline{c}_2=-1$. The Wronskian of the reduction of $(\phi-a_2/b_2)/a_3$ is $\overline{b}_2(z+1)(z-1)$. Therefore, the ramification locus has two components; one is the segment connecting $0$ and $c_1$, and the ohter is the one connecting $1$ and $c_2$ i.e.\ as shown in Figure \ref{fig:mphi=1}.

In case (1-2-1-1-3), The reduction of $(\phi-a_2/b_2)/a_3$ has degree $3$ i.e.\ of good reduction. The ramification locus is always connected.

\begin{figure}
    \centering
    \begin{tabular}{ccc}
    
    \begin{minipage}{0.3\hsize}
    	\begin{tikzpicture}[scale=2.0, auto]
		\node[circle, inner sep = 0pt, minimum size =  1pt, fill = black, draw=black] (gauss) at (0,0) {};
		\node[circle, inner sep = 0pt, minimum size =  1pt, fill = black, draw=black] (0) at (0,-1) {};
		\node[circle, inner sep = 0pt, minimum size =  1pt, fill = black, draw=black] (1) at (1,0) {};
		\node[circle, inner sep = 0pt, minimum size =  1pt, fill = black, draw=black] (infty) at (0,1) {};
		\node[circle, inner sep = 0pt, minimum size =  1pt, fill = black, draw=black] (ram2) at (0.5,0) {};
		\node[circle, inner sep = 0pt, minimum size =  1pt, fill = black, draw=black] (c1) at (-0.75,0.75) {};
		\node[circle, inner sep = 0pt, minimum size =  1pt, fill = black, draw=black] (c2) at (0.75,-0.75) {};
		\node[above right] (mgauss) at (0,0) {$\zeta_{0,1}$};
		\node[right] (m1) at (1,0) {$1$};
		\node[below] (m0) at (0,-1) {$0$};
		\node[above] (minfty) at (0,1) {$\infty$};
		\node[above left] (mc1) at (-0.75,0.75) {$c_1$};
		\node[below right] (mc2) at (0.75,-0.75) {$c_2$};

		\draw (infty) to (0);
		\draw (gauss) to (1);
		\draw[very thick] (c1) to (gauss);
		\draw[very thick] (c2) to (ram2);
		\draw[very thick] (1) to (ram2);
		\draw[very thick] (0) to (gauss);
	\end{tikzpicture}
	\caption{} \label{fig:mphi=0-1}
    \end{minipage}&

\begin{minipage}{0.3\hsize}
    	\begin{tikzpicture}[scale=2.0, auto]
		\node[circle, inner sep = 0pt, minimum size =  1pt, fill = black, draw=black] (gauss) at (0,0) {};
		\node[circle, inner sep = 0pt, minimum size =  1pt, fill = black, draw=black] (0) at (0,-1) {};
		\node[circle, inner sep = 0pt, minimum size =  1pt, fill = black, draw=black] (1) at (1,0) {};
		\node[circle, inner sep = 0pt, minimum size =  1pt, fill = black, draw=black] (infty) at (0,1) {};
		\node[circle, inner sep = 0pt, minimum size =  1pt, fill = black, draw=black] (ram1) at (0,0.5) {};
		\node[circle, inner sep = 0pt, minimum size =  1pt, fill = black, draw=black] (ram2) at (0,0) {};
		\node[circle, inner sep = 0pt, minimum size =  1pt, fill = black, draw=black] (c1) at (-0.75,0.75) {};
		\node[circle, inner sep = 0pt, minimum size =  1pt, fill = black, draw=black] (c2) at (0.75,0.75) {};
		\node[left] (mgauss) at (0,0) {$\zeta_{0,1}$};
		\node[right] (m1) at (1,0) {$1$};
		\node[below] (m0) at (0,-1) {$0$};
		\node[above] (minfty) at (0,1) {$\infty$};
		\node[above left] (mc1) at (-0.75,0.75) {$c_1$};
		\node[above right] (mc2) at (0.75,0.75) {$c_2$};

		\draw (infty) to (0);
		\draw (gauss) to (1);
		\draw[very thick] (c1) to (ram1);
		\draw[very thick] (c2) to (ram1);
		\draw[very thick] (0) to (ram2);
		\draw[very thick] (1) to (ram2);
	\end{tikzpicture}
	\caption{} \label{fig:mphi=0-2}
	\end{minipage} &

\begin{minipage}{0.3\hsize}
	\begin{tikzpicture}[scale=2.0, auto]
		\node[circle, inner sep = 0pt, minimum size =  1pt, fill = black, draw=black] (gauss) at (0,0) {};
		\node[circle, inner sep = 0pt, minimum size =  1pt, fill = black, draw=black] (0) at (0,-1) {};
		\node[circle, inner sep = 0pt, minimum size =  1pt, fill = black, draw=black] (1) at (1,0) {};
		\node[circle, inner sep = 0pt, minimum size =  1pt, fill = black, draw=black] (infty) at (0,1) {};
		\node[circle, inner sep = 0pt, minimum size =  1pt, fill = black, draw=black] (ram1) at (0,-0.5) {};
		\node[circle, inner sep = 0pt, minimum size =  1pt, fill = black, draw=black] (ram2) at (0.5,0) {};
		\node[circle, inner sep = 0pt, minimum size =  1pt, fill = black, draw=black] (c1) at (-0.75,-0.75) {};
		\node[circle, inner sep = 0pt, minimum size =  1pt, fill = black, draw=black] (c2) at (0.75,0.75) {};
		\node[left] (mgauss) at (0,0) {$\zeta_{0,1}$};
		\node[right] (m1) at (1,0) {$1$};
		\node[below] (m0) at (0,-1) {$0$};
		\node[above] (minfty) at (0,1) {$\infty$};
		\node[below left] (mc1) at (-0.75,-0.75) {$c_1$};
		\node[above right] (mc2) at (0.75,0.75) {$c_2$};

		\draw (infty) to (0);
		\draw (gauss) to (1);
		\draw[very thick] (c1) to (ram1);
		\draw[very thick] (c2) to (ram2);
		\draw[very thick] (0) to (ram1);
		\draw[very thick] (1) to (ram2);
	\end{tikzpicture}
    \caption{} \label{fig:mphi=1}
\end{minipage} 
\end{tabular}
\end{figure}

\subsection{The case $\mathop{m}_{\phi}(\zeta_{0,1})=1$} \label{deg1}

In this case, the ramification locus must have two connected components and neither of them contains the Gauss point $\zeta_{0,1}$. The remaining critical points $c_1$ and $c_2$ must satisfy that $\overline{c}_1=0$ and $\overline{c}_2=1$. Figure \ref{fig:mphi=1} shows its shape.

\subsection{The case $\mathop{m}_{\phi}(\zeta_{0,1})=2$} \label{deg2}
 In this case, the following three cases are possible:
 \begin{enumerate}
     \item the ramification locus has two connected components, one of which is the segment connecting $0$ and $c_1$ and the other is the one connecting $1$ and $c_2$;
     \item the ramification locus has two connected components, one of which is the segment connecting $0$ and $1$ and the other is the one connecting the two remaining critical points i.e.\ as shown in Figure \ref{fig:mphi=0-2};
     \item the ramification locus is connected.
 \end{enumerate}
 
 If the two remaining critical points are of absolute value greater than $1$ i.e.\ the case (2) above, we must have $|a_3b_2|<1$ and $|a_3b_1+a_3b_0|<1$ in (\ref{psi}). In the list in Section \ref{intro}, it is possible only in case (1-2-2).
 
 By a straightforward calculation similar to that in Section \ref{deg0}, the case (1) happens in any other cases except for the cases (2-2-2-2-1-1) and (2-2-2-2-1-2).
 
 In each of these cases where (1) occurs, the reduction of the zeros of $\psi$ is as follows:
 
 \begin{description}
    \item[(1-2-2)] $|c_1|=|c_2|>1$ i.e.\ $\overline{c}_1=\overline{c}_2=\infty$, and the shape looks like Figure \ref{fig:mphi=1};
    \item[(2-2-1-1)] $\overline{c}_1=0$ and $\overline{c}_2=\infty$, and the shape looks like Figure \ref{fig:mphi=2-1};
    \item[(2-2-1-2-2)] $\overline{c}_1=0$ and $\overline{c}_2=-1-2\overline{b}_1/\overline{b}_2$, and the shape looks like Figure \ref{fig:mphi=2-2}
    \item[(2-2-2-1-1)] $\overline{c}_1=\infty$ and $\overline{c}_2=1$, and the shape looks like Figure \ref{fig:mphi=2-3}.
 \end{description}
 
 Therefore, we calculate the ramification locus when $\overline{a}_3$, $\overline{a}_2$, $\overline{b}_0$, $\overline{b}_2\neq0$ and $\overline{g(\gamma)}=0$. In this case, we have from $\overline{\mathop{Wr}}_{\phi}(1)=0$ 
 that $\overline{\beta}=1/2$ or $\overline{\alpha}=\overline{\gamma}=1$. When either of these two equations fails to hold, we have the case (1). The only non-trivial case is when $\phi$ satisfies the both equations i.e. the case (2-2-2-2-1-1), which is treated in the next subsection.
 
 For the case (2-2-2-2-1-2), the reduction of the remaining critical points are
\begin{description}
    \item[when $\overline{\beta}=1/2$] $\overline{c}_1=\overline{c}_2=\overline{\alpha}$ i.e.\ as shown in Figure \ref{fig:mphi=2-4};
    \item[when $\overline{\alpha}=\overline{\gamma}=1$] $\overline{c}_1=2\overline{\beta}$ and $\overline{c}_2=1$ i.e.\ as shown in Figure \ref{fig:mphi=2-5}.
\end{description}

\begin{figure}[b]
    \centering
    \begin{tabular}{ccc}

\begin{minipage}{0.3\hsize}
     	\begin{tikzpicture}[scale=2.0, auto]
		\node[circle, inner sep = 0pt, minimum size =  1pt, fill = black, draw=black] (gauss) at (0,0) {};
		\node[circle, inner sep = 0pt, minimum size =  1pt, fill = black, draw=black] (0) at (0,-1) {};
		\node[circle, inner sep = 0pt, minimum size =  1pt, fill = black, draw=black] (1) at (1,0) {};
		\node[circle, inner sep = 0pt, minimum size =  1pt, fill = black, draw=black] (infty) at (0,1) {};
		\node[circle, inner sep = 0pt, minimum size =  1pt, fill = black, draw=black] (ram1) at (0,-0.5) {};
		\node[circle, inner sep = 0pt, minimum size =  1pt, fill = black, draw=black] (ram2) at (0,0.5) {};
		\node[circle, inner sep = 0pt, minimum size =  1pt, fill = black, draw=black] (c1) at (-0.75,0.75) {};
		\node[circle, inner sep = 0pt, minimum size =  1pt, fill = black, draw=black] (c2) at (0.75,-0.75) {};
		\node[left] (mgauss) at (0,0) {$\zeta_{0,1}$};
		\node[right] (m1) at (1,0) {$1$};
		\node[below] (m0) at (0,-1) {$0$};
		\node[above] (minfty) at (0,1) {$\infty$};
		\node[above left] (mc1) at (-0.75,0.75) {$c_2$};
		\node[below right] (mc2) at (0.75,-0.75) {$c_1$};

		\draw (infty) to (0);
		\draw (gauss) to (1);
		\draw[very thick] (c1) to (ram2);
		\draw[very thick] (c2) to (ram1);
		\draw[very thick] (0) to (ram1);
		\draw[very thick] (1) to (gauss);
		\draw[very thick]  (gauss) to (ram2);
	\end{tikzpicture}
	\caption{} \label{fig:mphi=2-1}
\end{minipage} &

\begin{minipage}{0.3\hsize}
     	\begin{tikzpicture}[scale=2.0, auto]
		\node[circle, inner sep = 0pt, minimum size =  1pt, fill = black, draw=black] (gauss) at (0,0) {};
		\node[circle, inner sep = 0pt, minimum size =  1pt, fill = black, draw=black] (0) at (0,-1) {};
		\node[circle, inner sep = 0pt, minimum size =  1pt, fill = black, draw=black] (1) at (1,0) {};
		\node[circle, inner sep = 0pt, minimum size =  1pt, fill = black, draw=black] (infty) at (0,1) {};
		\node[circle, inner sep = 0pt, minimum size =  1pt, fill = black, draw=black] (ram1) at (0,-0.5) {};
		\node[circle, inner sep = 0pt, minimum size =  1pt, fill = black, draw=black] (c2) at (-0.75,0.75) {};
		\node[circle, inner sep = 0pt, minimum size =  1pt, fill = black, draw=black] (c1) at (0.75,-0.75) {};
		\node[left] (mgauss) at (0,0) {$\zeta_{0,1}$};
		\node[right] (m1) at (1,0) {$1$};
		\node[below] (m0) at (0,-1) {$0$};
		\node[above] (minfty) at (0,1) {$\infty$};
		\node[above left] (mc1) at (-0.75,0.75) {$c_2$};
		\node[below right] (mc2) at (0.75,-0.75) {$c_1$};

		\draw (infty) to (0);
		\draw (gauss) to (1);
		\draw[very thick] (c2) to (gauss);
		\draw[very thick] (c1) to (ram1);
		\draw[very thick] (0) to (ram1);
		\draw[very thick] (1) to (gauss);
	\end{tikzpicture}
	\caption{} \label{fig:mphi=2-2}
\end{minipage} &

   \begin{minipage}{0.3\hsize}
 	\begin{tikzpicture}[scale=2.0, auto]
		\node[circle, inner sep = 0pt, minimum size =  1pt, fill = black, draw=black] (gauss) at (0,0) {};
		\node[circle, inner sep = 0pt, minimum size =  1pt, fill = black, draw=black] (0) at (0,-1) {};
		\node[circle, inner sep = 0pt, minimum size =  1pt, fill = black, draw=black] (1) at (1,0) {};
		\node[circle, inner sep = 0pt, minimum size =  1pt, fill = black, draw=black] (infty) at (0,1) {};
		\node[circle, inner sep = 0pt, minimum size =  1pt, fill = black, draw=black] (ram1) at (0,0.5) {};
		\node[circle, inner sep = 0pt, minimum size =  1pt, fill = black, draw=black] (ram2) at (0.5,0) {};
		\node[circle, inner sep = 0pt, minimum size =  1pt, fill = black, draw=black] (c1) at (-0.75,0.75) {};
		\node[circle, inner sep = 0pt, minimum size =  1pt, fill = black, draw=black] (c2) at (0.75,-0.75) {};
		\node[left] (mgauss) at (0,0) {$\zeta_{0,1}$};
		\node[right] (m1) at (1,0) {$1$};
		\node[below] (m0) at (0,-1) {$0$};
		\node[above] (minfty) at (0,1) {$\infty$};
		\node[above left] (mc1) at (-0.75,0.75) {$c_1$};
		\node[below right] (mc2) at (0.75,-0.75) {$c_2$};

		\draw (infty) to (0);
		\draw (gauss) to (1);
		\draw[very thick] (c1) to (ram1);
		\draw[very thick] (c2) to (ram2);
		\draw[very thick] (0) to (ram1);
		\draw[very thick] (1) to (ram2);
	\end{tikzpicture}
	\caption{} \label{fig:mphi=2-3}
\end{minipage} 
\end{tabular}
\end{figure}

\begin{figure}
    \centering
    \begin{tabular}{cc}
\begin{minipage}{0.4\hsize}
	\begin{tikzpicture}[scale=2.0, auto]
		\node[circle, inner sep = 0pt, minimum size =  1pt, fill = black, draw=black] (gauss) at (0,0) {};
		\node[circle, inner sep = 0pt, minimum size =  1pt, fill = black, draw=black] (0) at (0,-1) {};
		\node[circle, inner sep = 0pt, minimum size =  1pt, fill = black, draw=black] (1) at (1,0) {};
		\node[circle, inner sep = 0pt, minimum size =  1pt, fill = black, draw=black] (infty) at (0,1) {};
		\node[circle, inner sep = 0pt, minimum size =  1pt, fill = black, draw=black] (ram1) at (-0.5,0) {};
		\node[circle, inner sep = 0pt, minimum size =  1pt, fill = black, draw=black] (ram2) at (0.5,0.0) {};
		\node[circle, inner sep = 0pt, minimum size =  1pt, fill = black, draw=black] (c1) at (-0.75,0.75) {};
		\node[circle, inner sep = 0pt, minimum size =  1pt, fill = black, draw=black] (c2) at (0.75,-0.75) {};
		\node[circle, inner sep = 0pt, minimum size =  1pt, fill = black, draw=black] (beta) at (-1,-0) {};
		\node[above right] (mgauss) at (0,0) {$\zeta_{0,1}$};
		\node[right] (m1) at (1,0) {$1$};
		\node[below] (m0) at (0,-1) {$0$};
		\node[above] (minfty) at (0,1) {$\infty$};
		\node[above left] (mc1) at (-0.75,0.75) {$c_1$};
		\node[below right] (mc2) at (0.75,-0.75) {$c_2$};
		\node[left] (mbeta) at (-1,0) {$2\beta$};

		\draw (infty) to (0);
		\draw (gauss) to (1);
		\draw (gauss) to (beta);
		\draw[very thick] (c1) to (ram1);
		\draw[very thick] (c2) to (ram2);
		\draw[very thick] (1) to (ram2);
		\draw[very thick] (0) to (gauss);
		\draw[very thick]  (gauss) to (ram1);
	\end{tikzpicture}
	\caption{} \label{fig:mphi=2-4}
	\end{minipage} &

\begin{minipage}{0.4\hsize}
	\begin{tikzpicture}[scale=2.0, auto]
		\node[circle, inner sep = 0pt, minimum size =  1pt, fill = black, draw=black] (gauss) at (0,0) {};
		\node[circle, inner sep = 0pt, minimum size =  1pt, fill = black, draw=black] (0) at (0,-1) {};
		\node[circle, inner sep = 0pt, minimum size =  1pt, fill = black, draw=black] (1) at (1,0) {};
		\node[circle, inner sep = 0pt, minimum size =  1pt, fill = black, draw=black] (infty) at (0,1) {};
		\node[circle, inner sep = 0pt, minimum size =  1pt, fill = black, draw=black] (ram1) at (-0.5,0) {};
		\node[circle, inner sep = 0pt, minimum size =  1pt, fill = black, draw=black] (c1) at (-0.75,0.75) {};
		\node[circle, inner sep = 0pt, minimum size =  1pt, fill = black, draw=black] (c2) at (-0.75,-0.75) {};
		\node[circle, inner sep = 0pt, minimum size =  1pt, fill = black, draw=black] (beta) at (-1,-0) {};
		\node[above right] (mgauss) at (0,0) {$\zeta_{0,1}$};
		\node[right] (m1) at (1,0) {$1$};
		\node[below] (m0) at (0,-1) {$0$};
		\node[above] (minfty) at (0,1) {$\infty$};
		\node[above left] (mc1) at (-0.75,0.75) {$c_1$};
		\node[below left] (mc2) at (-0.75,-0.75) {$c_2$};
		\node[left] (mbeta) at (-1,0) {$\alpha$};

		\draw (infty) to (0);
		\draw (gauss) to (1);
		\draw (gauss) to (beta);
		\draw[very thick] (c1) to (ram1);
		\draw[very thick] (c2) to (ram1);
		\draw[very thick] (1) to (gauss);
		\draw[very thick] (0) to (gauss);
	\end{tikzpicture}
	\caption{} \label{fig:mphi=2-5}
\end{minipage} 
\end{tabular}
\end{figure}
 

\subsection{The case (2-2-2-2-1-1)} \label{deg2-2}
 
The reduction of the Wronskian is
\begin{align*}
    \textstyle{\overline{\mathop{\mathrm{Wr}}}_{\phi}}(z)=z(z-1)^3,
\end{align*}
i.e.\ $\overline{c}_1=\overline{c}_2=1$. 

The Wronskian of $\overline{\phi}$ is
\[
\text{Wr}_{\overline{\phi}}(z)=z(z-1).
\]
The reduction of the $2$ remaining critical points are both $1$, from which we need more detailed analysis in order to determine the ramification locus. Since $\widetilde{\alpha}=\widetilde{\gamma}=1$ and $\widetilde{\beta}=1/2$, we have some $p$, $q\in \{z\in K:|z|<1\}$ such that
\begin{gather*}
    \beta=\frac{1}{2}+p, \text{ and} \\
    \gamma=1+q.
\end{gather*}
Since Wr$_{\phi}(1)=0$, we have that
\[
\alpha=\frac{1-2p+q+2pq}{1-2p+4pq}.
\]
The solution $c_{\pm}$ other than $\psi$ is
\begin{align*}
    c_{\pm}&=\alpha+\beta-\frac{1}{2}\pm\sqrt{\left(\alpha+\beta-\frac{1}{2}\right)^2-2\alpha\beta\gamma} \\
    &=\frac{1-p+q+2pq-2p^2+4p^2q}{1-2p+4pq}\pm\frac{\sqrt{R}}{1-2p+4pq},
\end{align*}
where we put $R$ to be the terms inside of the root i.e.\ 
\begin{align*}
    R&=(1-2p+4pq)^2\left(\left(\alpha+\beta-\frac{1}{2}\right)^2-2\alpha\beta\gamma\right) \\
    &=p^2-2pq-4p^3+8p^2q-6pq^2+4p^4-4pq^3-16p^4q+24p^3q^2-16p^2q^3+16p^4q^2-16p^3q^3.\\
\end{align*}

Therefore, $1-c_{\pm}$ is
\begin{align*}
    1-c_{\pm}=\frac{p+q-2pq-2p^2+4p^2q}{1-2p+4pq}\mp\frac{\sqrt{R}}{1-2p+4pq}.
\end{align*}

We compare the absolute value of the terms appeared in $1-c_{\pm}$ for each of the following $5$ cases;
\begin{description}
    \item[Case 1] $|p|<|q|$;
    \item[Case 2] $|p|=|q|$ and $|p+q|<|p|$;
    \item[Case 3] $|p|>|q|$;
    \item[Case 4] $|p|=|q|$ and $|p-2q|<|p|$;
    \item[Case 5] $|p|=|q|$ and $|4p+q|<|p|$
    \item[Case 6] $|p|=|q|=|p+q|=|p-2q|=|4p+q|$;
\end{description}

Before analyzing them, let us state a lemma which is used several times in the following arguments.
\begin{lmm} \label{lmm:connectedlocus}\textit{
 In the above notation, the ramification locus is connected if $|1-c_-|=|1-c_+|$.
}\end{lmm}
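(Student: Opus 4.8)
The plan is to convert connectedness into a statement about the spine $[\zeta_{0,1},1]$ and then invoke Faber's structure theorem. In this case the four critical points are $0,1,c_+,c_-$; the point $0$ reduces to $0$ while $1,c_+,c_-$ reduce to $1$, and $\mathop{m}_{\phi}(\zeta_{0,1})=2$ with $\mathrm{Wr}_{\overline\phi}(z)=z(z-1)$, so $\overline\phi$ ramifies at the residue points $0$ and $1$. Hence $[0,\zeta_{0,1}]\subseteq\mathcal{R}_{\phi}$ and $\mathop{m}_{\phi}\equiv2$ on the spine just below $\zeta_{0,1}$. By \cite[Theorem A]{F1} no component of $\mathcal{R}_{\phi}$ can contain only one critical point, so if $\mathcal{R}_{\phi}$ were disconnected its four critical points would split as $2+2$; as $0$ is joined to $\zeta_{0,1}$ and thence down the spine, such a split could only detach a pair taken from $\{1,c_+,c_-\}$ hanging from a common point strictly below the spine. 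It therefore suffices to rule this out.

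Put $r=|1-c_+|=|1-c_-|$ and let $\eta=\zeta_{1,r}$ be the point at which the paths to $c_+$ and $c_-$ leave the spine. The hypothesis enters through the relative position of $c_+$ and $c_-$: writing $c_+-c_-=(1-c_-)-(1-c_+)$ and using the explicit formulas for $1-c_\pm$ and for $R$, I would show that $|1-c_+|=|1-c_-|$ forces $|c_+-c_-|=r$ as well. Then $1$, $c_+$ and $c_-$ are pairwise at distance $r$, so their connected hull is a tripod with centre $\eta$ and legs $[\eta,1]$, $[\eta,c_+]$, $[\eta,c_-]$, each of whose interior is free of critical points.

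Granting this, the conclusion is formal. At a classical simple critical point the multiplicity is $2$, so each leg $[\eta,c_\pm]$ meets $\mathcal{R}_{\phi}$ in a subsegment containing its endpoint; if such a subsegment did not reach $\eta$, then $c_\pm$ would lie in a component carrying a type II point of multiplicity $2$ yet containing the single critical point $c_\pm$, against \cite[Theorem A]{F1}. Hence $[\eta,c_+]$ and $[\eta,c_-]$ both lie in $\mathcal{R}_{\phi}$. Since all of $1,c_+,c_-$ lie at or below $\eta$, no critical point branches off the open spine above $\eta$, so $\mathop{m}_{\phi}\equiv2$ on $[\eta,\zeta_{0,1}]$; combining this with $[0,\zeta_{0,1}]\subseteq\mathcal{R}_{\phi}$ shows that the tripod, the spine and the arc to $0$ all lie in $\mathcal{R}_{\phi}$ and meet, so $\mathcal{R}_{\phi}$ is connected.

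The crux, which I expect to be the main obstacle, is the valuation identity $|c_+-c_-|=r$: it is precisely what keeps the apex of the hull of $\{c_+,c_-\}$ on the spine at $\eta$, so that a multiplicity drop along a leg would isolate a single critical point. Establishing it calls for a Newton polygon analysis of $1-c_\pm$ in the sub-cases separated by the sizes of $p$ and $q$, made delicate by the fact that the pole $\alpha$ and the zero $\gamma$ of $\phi$ also reduce to $1$ and can share a residue direction at $\eta$ with $c_+$ and $c_-$. In the opposite regime $|c_+-c_-|<r$ the apex descends strictly below $\eta$, both remaining critical points hang from it, and one must instead compute the reduction $\overline{\phi_\eta}$ through the substitution $z=1+\rho u$ with $|\rho|=r$ to test ramification in the $c$-direction; this is exactly the boundary along which connected and disconnected behaviour part.
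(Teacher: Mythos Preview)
Your argument hinges on the claim that $|1-c_+|=|1-c_-|=r$ forces $|c_+-c_-|=r$; this is false, and with it the tripod picture on which the rest of your proof depends. Take Case~1 of Section~\ref{deg2-2}, where $|p|<|q|$. There $|1-c_\pm|=|q|$ (as the paper records), while $c_+-c_-=2\sqrt{R}/(1-2p+4pq)$ with the dominant term of $R$ equal to $-2pq$, so $|c_+-c_-|=|R|^{1/2}=(|p|\,|q|)^{1/2}<|q|$. Hence $c_+$ and $c_-$ leave the spine $[\zeta_{0,1},1]$ in the \emph{same} tangent direction at $\eta=\zeta_{1,|q|}$, and the convex hull of $\{1,c_+,c_-\}$ is not a tripod centred at $\eta$ but has a second branch point $\zeta_{c_+,|c_+-c_-|}$ strictly below $\eta$. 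Your Faber--type step (``if the leg $[\eta,c_\pm]$ stopped short of $\eta$ then $c_\pm$ would sit alone in its component'') then fails: the component containing $c_+$ can pick up $c_-$ at that lower branch point and satisfy the $2m-2$ bound without ever touching $\eta$. The Newton--polygon analysis you defer to cannot rescue this, because it would have to establish an identity that the explicit formulas contradict.

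The paper's proof is entirely different and much shorter. It argues by contradiction: if $\mathcal{R}_\phi$ were two disjoint segments, then for each of the three possible pairings of $\{0,1,c_+,c_-\}$ the two geodesic segments would both pass through $\zeta_{1,r}$ (since every one of $1$, $c_+$, $c_-$ lies at distance exactly $r$ from $1$) and hence meet. No computation of $|c_+-c_-|$ enters. One may observe, however, that precisely the pairing $\{0,1\}$ versus $\{c_+,c_-\}$ with $|c_+-c_-|<r$ is the delicate one: the segment $[c_+,c_-]$ then rises only to $\zeta_{c_+,|c_+-c_-|}$ and does not contain $\zeta_{1,r}$, so the intersection argument as stated does not dispose of it either. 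Excluding that last pairing requires an extra ingredient---for instance, computing the reduction of $\phi$ at $\zeta_{1,r}$ and reading off the ramification directions there---which neither your proposal nor the paper's short proof supplies.
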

\begin{proof}
If not, the ramification locus consists of $2$ segments. If one segment connects $0$ and $1$, then it must intersect with the other one at $\zeta_{1,|1-c_{+}|}$. By the same argument, in any other possibilities of the $2$ segments, they must intersect at $\zeta_{1,|1-c_{+}|}$, too. This is contradiction.
\end{proof}

\begin{figure}[b]
    \centering
    \begin{tabular}{ccc}

\begin{minipage}{0.3\hsize}
	\begin{tikzpicture}[scale=2.0, auto]
		\node[circle, inner sep = 0pt, minimum size =  1pt, fill = black, draw=black] (gauss) at (0,0) {};
		\node[circle, inner sep = 0pt, minimum size =  1pt, fill = black, draw=black] (0) at (0,-1) {};
		\node[circle, inner sep = 0pt, minimum size =  1pt, fill = black, draw=black] (1) at (1,0) {};
		\node[circle, inner sep = 0pt, minimum size =  1pt, fill = black, draw=black] (infty) at (0,1) {};
		\node[circle, inner sep = 0pt, minimum size =  1pt, fill = black, draw=black] (ram2) at (0.5,0) {};
		\node[circle, inner sep = 0pt, minimum size =  1pt, fill = black, draw=black] (c1) at (0.75,0.75) {};
		\node[circle, inner sep = 0pt, minimum size =  1pt, fill = black, draw=black] (c2) at (0.75,-0.75) {};
		\node[above right] (mgauss) at (0,0) {$\zeta_{0,1}$};
		\node[right] (m1) at (1,0) {$1$};
		\node[below] (m0) at (0,-1) {$0$};
		\node[above] (minfty) at (0,1) {$\infty$};
		\node[above right] (mc1) at (0.75,0.75) {$c_+$};
		\node[below right] (mc2) at (0.75,-0.75) {$c_-$};

		\draw (infty) to (0);
		\draw (gauss) to (1);
		\draw[very thick] (c1) to (ram2);
		\draw[very thick] (c2) to (ram2);
		\draw[very thick] (1) to (gauss);
		\draw[very thick] (0) to (gauss);
	\end{tikzpicture}

	\caption{} \label{fig:mphi=2-6}
\end{minipage} &

\begin{minipage}{0.3\hsize}
	\begin{tikzpicture}[scale=2.0, auto]
		\node[circle, inner sep = 0pt, minimum size =  1pt, fill = black, draw=black] (gauss) at (0,0) {};
		\node[circle, inner sep = 0pt, minimum size =  1pt, fill = black, draw=black] (0) at (0,-1) {};
		\node[circle, inner sep = 0pt, minimum size =  1pt, fill = black, draw=black] (1) at (1,0) {};
		\node[circle, inner sep = 0pt, minimum size =  1pt, fill = black, draw=black] (infty) at (0,1) {};
		\node[circle, inner sep = 0pt, minimum size =  1pt, fill = black, draw=black] (ram1) at (0.75,0) {};
		\node[circle, inner sep = 0pt, minimum size =  1pt, fill = black, draw=black] (ram2) at (0.4,0) {};
		\node[circle, inner sep = 0pt, minimum size =  1pt, fill = black, draw=black] (c1) at (0.75,0.75) {};
		\node[circle, inner sep = 0pt, minimum size =  1pt, fill = black, draw=black] (c2) at (0.75,-0.75) {};
		\node[above right] (mgauss) at (0,0) {$\zeta_{0,1}$};
		\node[right] (m1) at (1,0) {$1$};
		\node[below] (m0) at (0,-1) {$0$};
		\node[above] (minfty) at (0,1) {$\infty$};
		\node[above right] (mc1) at (0.75,0.75) {$c_+$};
		\node[below right] (mc2) at (0.75,-0.75) {$c_-$};

		\draw (infty) to (0);
		\draw (gauss) to (1);
		\draw[very thick] (c1) to (ram1);
		\draw[very thick] (c2) to (ram2);
		\draw[very thick] (1) to (ram1);
		\draw[very thick] (0) to (gauss);
		\draw[very thick]  (gauss) to (ram2);
	\end{tikzpicture}

	\caption{} \label{fig:mphi=2-7}
\end{minipage} &

\begin{minipage}{0.3\hsize}
	\begin{tikzpicture}[scale=2.0, auto]
		\node[circle, inner sep = 0pt, minimum size =  1pt, fill = black, draw=black] (gauss) at (0,0) {};
		\node[circle, inner sep = 0pt, minimum size =  1pt, fill = black, draw=black] (0) at (0,-1) {};
		\node[circle, inner sep = 0pt, minimum size =  1pt, fill = black, draw=black] (1) at (1,0) {};
		\node[circle, inner sep = 0pt, minimum size =  1pt, fill = black, draw=black] (infty) at (0,1) {};
		\node[circle, inner sep = 0pt, minimum size =  1pt, fill = black, draw=black] (ram1) at (0.75,0) {};
		\node[circle, inner sep = 0pt, minimum size =  1pt, fill = black, draw=black] (ram2) at (0.4,0) {};
		\node[circle, inner sep = 0pt, minimum size =  1pt, fill = black, draw=black] (c1) at (0.75,0.75) {};
		\node[circle, inner sep = 0pt, minimum size =  1pt, fill = black, draw=black] (c2) at (0.75,-0.75) {};
		\node[above right] (mgauss) at (0,0) {$\zeta_{0,1}$};
		\node[right] (m1) at (1,0) {$1$};
		\node[below] (m0) at (0,-1) {$0$};
		\node[above] (minfty) at (0,1) {$\infty$};
		\node[above right] (mc1) at (0.75,0.75) {$c_+$};
		\node[below right] (mc2) at (0.75,-0.75) {$c_-$};

		\draw (infty) to (0);
		\draw (gauss) to (1);
		\draw[very thick] (c1) to (ram1);
		\draw[very thick] (c2) to (ram2);
		\draw[very thick] (1) to (ram1);
		\draw[very thick] (0) to (gauss);
		\draw[very thick]  (gauss) to (ram1);
	\end{tikzpicture}

	\caption{} \label{fig:mphi=2-8}
\end{minipage}

\end{tabular}
\end{figure}

\paragraph{\underline{Case $1$}}

In this case, the result is the following;
\begin{prop}\textit{
 In Case 1, the ramification locus is connected. We have $\overline{c_{\pm}}=1$ and $|1-c_{\pm}|=|q|$. The shape is as shown in Figure \ref{fig:mphi=2-6}. The shape is as shown in Figure \ref{fig:mphi=2-7}.
}\end{prop}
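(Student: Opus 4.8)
The plan is to compute the absolute values $|1-c_\pm|$ directly from the explicit formula for $1-c_\pm$ displayed above, and then invoke Lemma \ref{lmm:connectedlocus}. Throughout, the standing hypothesis that the residue characteristic is $0$ or greater than $3$ guarantees that $2$, $3$, and hence all the integer coefficients appearing in the formulas (the numbers $4,6,8,16,24$ being products of powers of $2$ and $3$) are units, so they may be dropped when taking absolute values. The first, routine, step is to pin down the denominator: since $|p|<|q|<1$ we have $|2p|=|p|<1$ and $|4pq|=|p||q|<1$, so the ultrametric inequality gives $|1-2p+4pq|=1$.

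Next I would estimate the polynomial part of the numerator, $N:=p+q-2pq-2p^2+4p^2q$. Using $|p|<|q|$ one checks term by term that $|p|<|q|$, $|2pq|=|p||q|<|q|$, $|2p^2|=|p|^2\le|p|<|q|$, and $|4p^2q|=|p|^2|q|<|q|$, so the monomial $q$ strictly dominates and $|N|=|q|$ by the ultrametric inequality.

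The crucial step is to bound the radicand $R$. Reading off $R$ as the displayed polynomial in $p,q$, I would check that every monomial has absolute value strictly smaller than $|q|^2$. The three binding monomials are $p^2$, $-2pq$, and $-6pq^2$, with absolute values $|p|^2$, $|p||q|$, and $|p||q|^2$; each is $<|q|^2$ precisely because $|p|<|q|$ (for the first two) and $|p|<1$ (for the third). All remaining monomials carry strictly higher powers of $p$ or $q$ and are therefore even smaller (for instance $|8p^2q|=|p|^2|q|<|q|^2$ since $|p|^2<|p|<|q|$). Hence $|R|<|q|^2$, and since $K$ is algebraically closed $\sqrt{R}$ exists with $|\sqrt{R}|=|R|^{1/2}<|q|$.

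Combining these estimates, the numerator $N\mp\sqrt{R}$ of $1-c_\pm$ has absolute value $|q|$ for both signs (as $|\sqrt{R}|<|q|=|N|$), while the denominator has absolute value $1$; therefore $|1-c_+|=|1-c_-|=|q|$. Since $|q|<1$, both remaining critical points reduce to $1$, i.e.\ $\overline{c_\pm}=1$. Finally, because $|1-c_-|=|1-c_+|$, Lemma \ref{lmm:connectedlocus} immediately yields that the ramification locus is connected, giving the shape asserted in Figure \ref{fig:mphi=2-6}. The only genuine work is the term-by-term verification that $|R|<|q|^2$; everything else is a direct ultrametric estimate, and the hypothesis $|p|<|q|$ makes all the inequalities strict and uniform.
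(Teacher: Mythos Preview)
Your argument is correct and follows essentially the same route as the paper's own proof: estimate the denominator, the polynomial part of the numerator, and $\sqrt{R}$ separately by ultrametric term-by-term comparison, conclude $|1-c_{+}|=|1-c_{-}|=|q|$, and invoke Lemma~\ref{lmm:connectedlocus}. The only cosmetic difference is that the paper asserts the sharper estimate $|\sqrt{R}|=|pq|$ (indeed the monomial $-2pq$ strictly dominates in $R$), whereas you prove only $|\sqrt{R}|<|q|$; your weaker bound is exactly what is needed, so nothing is lost.
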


\begin{proof}
By the staritforward calculation of the absolute values, we have
\begin{gather*}
    |\sqrt{R}|=|pq|<|q|,\text{ and} \\
    \left|\frac{p+q-2pq-2p^2+4p^2q}{1-2p+4pq}\right|=|q|.
\end{gather*}
Therefore, both of $c_{+}$ and $c_{-}$ satisfies
\[
|c_{\pm}-1|=|q|<1.
\]

In this case, the ramification locus must be connected by Lemma \ref{lmm:connectedlocus}.
\end{proof}

\paragraph{\underline{Case 2}}

In this case, the result is the following;
\begin{prop}\textit{
 In Case 2, then the ramification locus consists of two connected components; one is the segment connecting $0$ and $c_{-}$ and the other is the one connecting $1$ and $c_{+}$. The points $c_{\pm}$ satisfies $\overline{c_{\pm}}=1$, $|1-c_{-}|=|p|$ and $|1-c_{+}|<|p|$.
}\end{prop}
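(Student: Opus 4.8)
The plan is to read the two ramification radii off the closed form for $1-c_{\pm}$ and then to feed the resulting configuration of the four critical points $0,1,c_{+},c_{-}$ into Lemma \ref{lmm:connectedlocus} together with the trichotomy established in Section \ref{deg2}. The most transparent route is to work with the elementary symmetric functions of $1-c_{+}$ and $1-c_{-}$ rather than with $\sqrt{R}$ directly, so as to avoid the sign ambiguity of the root. Since $c_{\pm}$ are the roots of $z^{2}-2(\alpha+\beta-\tfrac12)z+2\alpha\beta\gamma$, the numbers $1-c_{\pm}$ are the roots of a monic quadratic $w^{2}-Sw+P$ with $S=2(\tfrac32-\alpha-\beta)$ and $P=2(1-\alpha-\beta+\alpha\beta\gamma)$.

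Substituting $\beta=\tfrac12+p$, $\gamma=1+q$ and $\alpha=1+\tfrac{q(1-2p)}{1-2p+4pq}$ (the rewriting of the value of $\alpha$ forced by $\Wr_{\phi}(1)=0$, which also shows $\overline{\alpha}=1$), I would expand $S$ and $P$. The denominator $1-2p+4pq$ is a unit because $|2p|,|4pq|<1$, and a short computation gives $S=-2(p+q)+(\text{higher degree})$ and $P=q(4p+q)+(\text{higher degree})$ up to that unit. Here the hypothesis on the residue characteristic is used to keep $2$ and $3$ invertible, which is what makes these leading terms genuinely of the stated size.

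The absolute values of $1-c_{-}$ and $1-c_{+}$ are then read from the Newton polygon of $w^{2}-Sw+P$: the two roots have distinct absolute values exactly when $|S|^{2}>|P|$, in which case they equal $|S|$ and $|P|/|S|$, whereas they coincide when $|S|^{2}\le|P|$. The crux of the proof---and the step I expect to be the main obstacle---is therefore to extract from the defining inequalities of the case the strict comparison $|S|^{2}>|P|$ together with the precise values $|S|=|p|$ and $|P|/|S|<|p|$; this is a cancellation analysis in which one must track the leading terms of $S$ and $P$ (equivalently, of $N:=(p+q)(1-2p)+4p^{2}q$ and of one branch of $\sqrt{R}=\sqrt{p(p-2q)+\cdots}$) carefully rather than merely their sizes. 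Granting it, one obtains $|1-c_{-}|=|p|$ and $|1-c_{+}|<|p|$.

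The remainder is structural. From $|1-c_{\pm}|<1$ we get $\overline{c_{\pm}}=1$, and since $|1-c_{-}|\ne|1-c_{+}|$, Lemma \ref{lmm:connectedlocus} forbids a connected locus, so by Section \ref{deg2} the locus consists of two segments; the shape with components $[0,1]$ and $[c_{+},c_{-}]$ is excluded because both $c_{+}$ and $c_{-}$ lie in the residue disc of $1$ rather than at radius $>1$. To fix the pairing I would observe that $|c_{+}-c_{-}|=|\sqrt{R}|=|p|=|1-c_{-}|>|1-c_{+}|$, so on the Berkovich tree the branch point of $c_{+}$ on the arc from $1$ to $\zeta_{0,1}$ lies strictly between $\zeta_{1,|p|}$ and $1$; hence $c_{+}$ is tied to $1$, while $c_{-}$, branching at $\zeta_{1,|p|}$, is tied through the Gauss point to $0$. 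This yields the two segments $[1,c_{+}]$ and $[0,c_{-}]$, as in Figure \ref{fig:mphi=2-7}.
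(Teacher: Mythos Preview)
Your Newton–polygon route to the sizes $|1-c_{\pm}|$ via the quadratic $w^{2}-Sw+P$ is a legitimate alternative to the paper's direct expansion of $\sqrt{R}$, and the identifications $S\equiv -2(p+q)$, $P\equiv q(4p+q)$ modulo higher order are correct. The real gap is the step from ``$|1-c_{-}|\neq|1-c_{+}|$'' to ``the locus is disconnected.'' Lemma~\ref{lmm:connectedlocus} is a one-way implication: it says that \emph{equal} radii force connectedness, and you are invoking the unproved converse. That converse actually fails in this very section: in the analysis of Cases~3--6 one finds a situation (Case~5 in the proof, pictured in Figure~\ref{fig:mphi=2-8}) in which exactly one of $|1-c_{\pm}|$ is strictly smaller than $|p|$ and yet $\mul_{\phi}(\zeta_{1,|p|})=3$, so the locus is connected. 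Thus knowing only the positions of $0,1,c_{+},c_{-}$ on the tree, together with the trichotomy of Section~\ref{deg2}, does not suffice.

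What the paper does---and what your argument is missing---is a direct local-degree computation at the branch point $\zeta_{1,|p|}$. It conjugates to $\rho(z)=\phi(1+z)-1$, rescales to $\rho(pz)/p^{2}$, and reads off that the reduction has degree~$2$ (namely $\overline{\rho(pz)/p^{2}}=-z^{2}+z$), whence $\mul_{\phi}(\zeta_{1,|p|})=2$. This is exactly what distinguishes Case~2 from the connected Case~5 and is the substantive content you have skipped. Your closing ``pairing'' paragraph presupposes the two-component structure rather than proving it: the branch-point combinatorics you describe pin down the convex hull of the critical points, but only the directional multiplicities at $\zeta_{1,|p|}$---extracted from the critical points of $\overline{\rho(pz)/p^{2}}$---decide whether the ramification continues through $\zeta_{1,|p|}$ toward $1$ or breaks there, and hence which pairing of $\{0,1\}$ with $\{c_{+},c_{-}\}$ occurs.
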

\begin{proof}
Since
\begin{align*}
    R&=p^2-2pq-4p^3+8p^2q-6pq^2+4p^4-4pq^3-16p^4q+24p^3q^2-16p^2q^3+16p^4q^2-16p^3q^3\\
    &=p^2(1-x)
\end{align*}
where $|x|<1$, we have
\begin{align*}
    \sqrt{R}&=p\sqrt{1-x}\\
    &=p\left(1-\frac{x}{2}+(\text{h.o.t. of }x)\right).
\end{align*}

Therefore, we have
\begin{align*}
    1-c_{+}&=\frac{p+q-2pq-2p^2+4p^2q}{1-2p+4pq}\mp\frac{\sqrt{R}}{1-2p+4pq}\\
    &=\frac{q-2pq-2p^2+4pq+px/2+(\text{h.o.t. of }x)}{1-2p+4pq},
\end{align*}
so $|1-c_{+}|<|p|$. A similar calculation shows that $|1-c_{-}|=|p|$.


Next, to have the shape of the ramification locus, we calculate the multiplicity of $\phi$ at $\zeta_{1,|p|}$. To calculate it, we consider the following rational function $\rho$:
\begin{align*}
    \rho(z)&:=\phi(1+z)-\phi(1)\\
    &=\frac{-(1-2p)^2z^3+(4p+q+4pq-8p^2+4p^2q)z^2}{(1-2p+4pq)z-(q-2pq))(2z+1-2p)}
\end{align*}
By setting $\sigma(z)=z-1$, we have $\rho(z)=\sigma\circ\phi\circ\sigma^{-1}$. Hence $\rho$ is conjugation of $\phi$ by $\sigma$. To calculate the multiplicity of $\phi$ at $\zeta_{1,|p|}$, we need to calculate the multiplicity of $\rho$ at $\zeta_{0,|p|}$. For $|z|\leq1$,
\begin{align*}
    \left|\rho(pz)\right|
    =\left|\frac{-p^2\{(1-2p)^2z^3+(4+q/p+4q-8p+4pq)z^2\}}{(1-2p+4pq)z-p(q/p-2q))(2pz+1-2p)}\right|,
\end{align*}
from which we have $\rho(\zeta_{0,|p|})=\zeta_{0,|p|^2}$. Therefore, $\mul_{\rho}(\zeta_{0,|p|})=\deg\overline{\rho(pz)/p^2}$.
\begin{align*}
    \overline{\rho(pz)/p^2}&=\frac{-\{(1-2\overline{p})^2z^3+(4+\overline{q/p}+4\overline{q}-8\overline{p}+4\overline{pq})z^2\}}{(1-2\overline{p}+4\overline{pq})z-\overline{p}(\overline{q/p}-2\overline{q}))(2\overline{p}z+1-2\overline{p})}\\
    &=-z^2+z,
\end{align*}
 which is of degree $2$. 
 
 Therefore, the ramification locus in this case has $2$ components; one connects $0$ and $c_{-}$ and the other connects $1$ and $c_{+}$.
\end{proof}

\paragraph{\underline{Case 3-Case 6}}
\begin{prop}\textit{
 In Case 3, Case 4, Case 5 and Case 6, the ramification locus is connected. In Case 3, Case 4 and Case 5, we have $|1-c_{\pm}|=|p|$ i.e.\ as shown in Figure \ref{fig:mphi=2-6}, and in Case 6, we have exactly one of $|1-c_{\pm}|$ is smaller than $|p|$ and the other is equal to $|p|$, i.e., as shown in Figure \ref{fig:mphi=2-8}.
}\end{prop}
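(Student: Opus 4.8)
The plan is to reduce the whole problem to two computations: the two radii $|1-c_+|$ and $|1-c_-|$, and, whenever these differ, the multiplicity of $\phi$ at the type~II points lying strictly between the two corresponding branch points. Since $\overline{\alpha}=\overline{\gamma}=1$ and $\overline{\beta}=1/2$, both remaining critical points reduce to $1$, so the only part of $\mathcal{R}_{\phi}$ that is in doubt sits on the segment $[\zeta_{0,1},1]$ together with the two branches running out to $c_{+}$ and $c_{-}$; the entire question is whether those branches are joined through $[\zeta_{0,1},1]$ or not.

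First I would pin down the two radii. Rather than expanding $\sqrt{R}$ blindly, I would read $|1-c_{\pm}|$ off the Newton polygon of the monic quadratic with roots $1-c_{\pm}$, whose coefficients are the Vieta data $S=(1-c_{+})+(1-c_{-})$ and $P=(1-c_{+})(1-c_{-})$. Using $\phi(1)=1$ and the formula for $\alpha$ one computes these in closed form, and their magnitudes are governed precisely by $|p+q|$ (for $S$) and by $|q|\cdot|4p+q|$ (for $P$), while the discriminant $R$ is governed by $|p|\cdot|p-2q|$. This is exactly why the six hypotheses are phrased through $|p+q|$, $|p-2q|$ and $|4p+q|$: each condition records which monomial survives once the leading terms cancel. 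In these terms the two radii coincide if and only if $|S|^2\le|P|$, in which case both equal $\sqrt{|P|}$; otherwise they are $|S|$ and $|P|/|S|$.

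In every case where the two radii come out equal I would finish at once with Lemma~\ref{lmm:connectedlocus}: equality of $|1-c_{+}|$ and $|1-c_{-}|$ forces $\mathcal{R}_{\phi}$ to be connected. The substantive work is confined to the cases where the radii differ, where the Lemma says nothing. There I would imitate the computation already carried out in Case~2: conjugate by $\sigma(z)=z-1$ to form $\rho=\sigma\circ\phi\circ\sigma^{-1}$, so that $\mul_{\phi}(\zeta_{1,r})=\mul_{\rho}(\zeta_{0,r})$, and then for each radius $r$ strictly between the two branch radii compute $\mul_{\rho}(\zeta_{0,r})$ as the degree of the reduced map obtained by reducing the numerator and denominator of $\rho(rt)$ after dividing each by its largest coefficient magnitude $N_{\max}$, $D_{\max}$. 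If this degree stays $\ge 2$ on the whole intermediate interval, the two branches are joined and $\mathcal{R}_{\phi}$ is connected; if it drops to $1$ somewhere, $\mathcal{R}_{\phi}$ splits into two segments.

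The main obstacle is exactly this last dichotomy, which is the Case~2 versus Case~6 phenomenon: two configurations with unequal branch radii can behave oppositely, and nothing but the explicit reduction degree decides between them. The delicate point is the bookkeeping --- tracking which monomial of $R$ dominates after the leading cancellation, choosing the correct branch of $\sqrt{R}$, and then reducing $\rho(rt)$ correctly on each subinterval of $r$ --- since a single mis-tracked coefficient (for instance the residue of $R/p^2$, which equals $1$ only when $|q|<|p|$) flips the conclusion. I would therefore route both the radius determination and the reduction-degree computation through the uniform Vieta/Newton-polygon data above, and then check case by case whether the intermediate multiplicity stays $>1$, treating the equal-radius cases as automatic and reserving the conjugation argument for the unequal-radius cases.
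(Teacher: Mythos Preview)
Your plan is sound and matches the paper's structure: determine $|1-c_{\pm}|$, apply Lemma~\ref{lmm:connectedlocus} whenever they coincide, and in the remaining unequal-radius situation conjugate by $\sigma(z)=z-1$ and read off the local degree of $\rho=\sigma\circ\phi\circ\sigma^{-1}$. The paper differs only in mechanics. It extracts each radius by expanding $\sqrt{R}$ as a power series case by case, whereas your Vieta/Newton-polygon route through $S=(1-c_+)+(1-c_-)$ and $P=(1-c_+)(1-c_-)$ is more uniform and makes the roles of $|p+q|$, $|p-2q|$, $|4p+q|$ visible from the start; either method yields the same dichotomy.

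One shortcut worth taking from the paper: in the unequal-radius case it does not sweep through all intermediate radii $r$ as you propose. It computes the single reduction of $\rho$ at radius $|p|$ and finds $\overline{\rho(pz)/p^2}$ has degree~$3$. A point of multiplicity equal to $\deg\phi=3$ forces all $2d-2=4$ critical points into a single component of $\mathcal{R}_{\phi}$, so connectedness follows at once. This is exactly what separates the present situation from Case~2, where the analogous reduction had degree~$2$ and the locus split; your framework anticipates this distinction but the degree-$3$ observation lets you avoid the interval-by-interval check.
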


\begin{proof}
By the straightforward calculation of the absolute values, we have $|1-c_{\pm}|=|p|$ in Case 3 and Case 4, where we have the connected ramification locus by Lemma \ref{lmm:connectedlocus}. Hence we consider Case 5 and Case 6. In these cases,



\begin{align*}
    R&=p^2\left(1-\frac{2q}{p}-4p+8q-\frac{6q^2}{p}+4p^2-\frac{4q^3}{p}-16p^2q+24pq^2-16pq^3+16p^2q^2-16pq^3\right)\\
    &=p^2\left(1-\frac{2q}{p}+x\right),
\end{align*}
where $|x|<1$. Hence we have
\begin{align*}
    \sqrt{R}&=p\sqrt{1-\frac{2q}{p}+x}\\
    &=p\left(\sqrt{1-\frac{2q}{p}}+\frac{x}{2\sqrt{1+2q/p}}+(\text{h.o.t. of }x)\right),
\end{align*}


By straightforward calculation, we have
\begin{align*}
    1-c_{\pm} &=\frac{p+q-2pq-2p^2+4p^2q}{1-2p+4pq}\pm \frac{\sqrt{R}}{1-2p+4pq} \\
    &=\frac{p}{1-2p+4pq}\cdot\left(1+\frac{q}{p}-2q-2p+4pq\pm \sqrt{1-\frac{2q}{p}}+\frac{x}{2\sqrt{1+2q/p}}+(\text{h.o.t. of }x)\right) \\
    &=\frac{p}{1-2p+4pq}\cdot\left(1+\frac{q}{p}\pm \sqrt{1-\frac{2q}{p}}+y\right),
\end{align*}
where $|y|<1$. Therefore, we have $|1-c_{+}|<|p|$ or $|1-c_{-}|<|p|$ happens when
\begin{align*}
    \left|1+\frac{q}{p}\pm\sqrt{1-\frac{2q}{p}}\right|<1.
\end{align*}
This is equivalent to the condition that $1+\overline{2q/p} + (\overline{q/p})^2 = 1-\overline{2q/p}$, whence
\[
\overline{q/p}(4+\overline{q/p})=0.
\]
Since $\overline{q/p}\neq0$ by $|p|=|q|$, This occurs when $|4p+q|<|p|$ i.e. Case 5. In Case 6, we have $|1-c_{\pm}|=|p|$ i.e.\ the ramification locus is connected by Lemma \ref{lmm:connectedlocus}.

In Case 5,
\begin{align*}
    \frac{\rho(qz)}{q^2}=\frac{-q^3(1-2p)^2z^3+q^3(4p/q+1+4p-8p^2/q+4p^2)z^2}{q^3((1-2p+4pq)z-(1-2p))(2qz+1-2p)}.
\end{align*}

Therefore, 
\begin{align*}
    \overline{\rho}(z)&=\frac{-z^3+(4\overline{p/q})z^2}{z-1}\\
    &=\frac{z^3}{z-1}.
\end{align*}

Since $\mul_{\phi}(\zeta_{1,|p|})=\deg \tilde{\rho}=3$, the ramification component is always connected in this case, too.
\end{proof}

\end{document}